\newtheorem{theorem}{Theorem}
\theoremstyle{definition}
\theoremstyle{remark}
\begin{document}

\title{Local near-Beltrami structure and depletion of the nonlinearity in the 3D Navier-Stokes flows}

\author{Aseel Farhat}
\address{Department of Mathematics, Florida State University}
\email{afarhat@fsu.edu}

\author{Zoran Gruji\'c}
\address{Department of Mathematics, University of Virginia}
\email{zg7c@virginia.edu}

\date{\today}

\begin{abstract}
Computational simulations of turbulent flows indicate that the regions of low dissipation/enstrophy production feature high degree of local alignment between the velocity and the vorticty, i.e., the flow is
locally near-Beltrami. Hence one could envision a geometric scenario in which the persistence of the local near-Beltramy property might be consistent with a (possible) finite-time singularity
formation. The
goal of this note is to show that this scenario is in fact prohibited if the sine of the angle between the velocity and the vorticty is small enough
with respect to the local enstrophy.
\end{abstract}

\maketitle

\section{Introduction}

The motion of 3D incompressible, viscous, Newtonian fluid is described by the 3D Navier-Stokes (NS) equations,
\[
 u_t + (u \cdot \nabla) u = \nu \triangle u + \nabla p + f, \ \ \ \mbox{div} \, u = 0
\]
where $u$ is the velocity of the fluid, $p$ the pressure, $\nu$ the viscosity, and $f$ the external force. This is
supplemented with the initial condition $u_0$ and the appropriate boundary conditions. For simplicity, set the
viscosity to be 1, and suppose that the external force is of the potential type.

\medskip

Since the pioneering work of Leray \cite{Leray3} in 1930s, it has been known that if the initial condition is regular enough, 
the solution remains regular at least over a finite interval of time, the length 
of which depends on a suitable norm of the initial condition. The question of whether this time-interval can be extended 
to infinity for arbitrary large initial data remains open, and is usually referred to as the NS regularity problem.

\medskip

Taking the curl of the velocity-pressure formulation leads to the vorticity-velocity formulation of the 3D NS equations,
\[
 \omega_t + (u \cdot \nabla) \omega = \triangle \omega + (\omega \cdot \nabla) u;
\]
here, $\omega = \, \mbox{curl} \, u$ is the vorticity, the left-hand side represents transport of the vorticity by the velocity, 
the first term on the right-hand side
is the diffusion, and the second one the vortex-stretching term (which is absent in the 2D case where it is a routine exercise
to show that the flow remains regular for all times).

\medskip

Geometric studies of the NS regularity problem were initiated by Constantin in \cite{Co94} where he presented a singular
integral representation of the stretching factor in the evolution of the vorticity magnitude featuring a geometric kernel 
that was depleted by the local coherence of the vorticity direction. This led to a fundamental result \cite{CoFe93} stating that
as long as the vorticity direction remains Lipschitz (in the regions of intense vorticity) the flow remains regular. 
Subsequently--among the other results--the Lipchitz condition was scaled down to $\frac{1}{2}$-H\"older \cite{daVeigaBe02} and the 
spatiotemporal localization of various geometric, analytic and hybrid geometric-analytic regularity criteria was
presented in \cite{Gr09, GrGu10-1}. Let us remark that the local coherence of the vorticity direction is--in a 
sense--a locally near-2D property.

\medskip

The aforementioned approach was inspired by the computational simulations of turbulent flows revealing that the regions
of intense vorticity are dominated by coherent vortex structures and in particular vortex tubes/filaments. This type of geometry 
plausibly exhibits local coherence of the vorticity direction field. However, the simulations also
indicate the formation of local structures in which the vortex lines `meet' transversely or near-transversely
(cf. \cite{SJO91, HK02}) hinting at the possibility of a discontinuity in the vorticity direction field. 
These structures--at the same time--feature a local near-alignment
of the velocity and the vorticity, i.e., they are locally near-Beltrami. Note that being near-Beltrami is not a near-2D property. 

\medskip

The analysis of the probability density function for the distribution of the angle between velocity and vorticity conditionally sampled in the regions of high and low dissipation given in \cite{PYO85} revealed the tendency to evenly distribute and align, respectively (in particular, this was the case in Taylor-Green vortex flows). In addition, the statistical analysis of the data sourced from a direct numerical simulation of forced isotropic turbulence performed in \cite{CKL09} demonstrated a strong correlation between the local helicity and the local enstrophy. Hence, on one hand, there are indications that the local near-Beltrami structures are a signature of the regions of intense fluid activity, and persistence of the local near-Beltrami property might be consistent with possible formation of a singularity. On the other hand, the exact Beltrami flows are essentially linear (the vorticity satisfies the 3D heat equation), and it is natural to wonder whether it would be possible to effectively analyze the \emph{locally near-Beltrami flows} as the locally `near-linear' flows.

\medskip

In the special case of the \emph{globally} near-Beltrami flow, it was shown in \cite{daVeiga12} (in the case of the stress-free boundary conditions) that as long as the sine of the angle between the velocity and the vorticity is \emph{uniformly small}  with respect to the \emph{global enstophy}, the solution remains regular.
In this note, we demonstrate that the geometric depletion of the nonlinearity induced by the local near-Beltrami structure is effective in the setting of the \emph{spatiotemporal localization} of the flow to an arbitrarily small parabolic cylinder. More precisely, we show that if the sine of the angle between the velocity and the vorticity is small enough (on a local parabolic cylinder) with respect to the 
\emph{local enstrophy}, no singularity can form.

\medskip   

Several related results on the regularity issue for the 3D NSE using the Beltrami property are proved in \cite{BC09} and \cite{Ch10}. In \cite{BC09}, the authors show that if the velocity and the vorticity are near-orthogonal (the helicity is nearly-zero) at each point, then the solution of the 3D NSE is smooth. The flow in this case is "anti-Beltrami" and resembles the 2D geometric situation. In \cite{Ch10}, regularity criteria for 3D NSE are proved under a number of different conditions on the helicity vector $u\times \omega$ including boundedness in $L^{3,\infty}$.

\medskip 

At the end, let us mention that there are results in the literature in which suitably defined helical invariances and helical projections come to play. In particular, in \cite{MTL90} the authors obtained the global regularity for the helically symmetric solutions to the 3D NS equations, while \cite{BT13} demonstrated the global regularity for solutions living in the subspace of a sign-definite helicity.

\section{The main result}

The standard vorticity-velocity formulation of the 3D NS equations recalled in Introduction can be written in the form suitable for the study of local helicity, namely, 
\begin{align}\label{vor}
\omega_t -\Delta \omega + \nabla\times(\omega\times u) =0.
\end{align}

\medskip

Let $(x_0,t_0)$ be a point in the space-time, and let $Q_r(x_0,t_0)$ be the parabolic cylinder $B(x_0,r) \times (t_0-r^2, t_0)$. Define $\psi(x,t) = \phi(x)\eta(t)$ to be a spatiotemporal  cut-off function such that: $0\leq \phi\leq 1$ is supported in $B(x_0,2r)$ with 
\[ \phi(x) = 1, \quad x\in B(x_0,r); \quad \frac{|\nabla \phi|}{\phi^\delta} \leq \frac{C}{r}, \]
for some $\delta \in (0,1)$, and $0\leq \eta\leq1$ is supported in $(t_0-(2r)^2, t_0]$ with 
\[ \eta(t) = 1, \quad t\in [t_0-r^2, t_0]; \quad |\eta^{'}|\leq \frac{2}{r^2}. \]
Denote by $Q_{2r}^t(x_0,t_0)$ the parabolic sub-cylinder of $Q_{2r}(x_0,t_0)$, $Q_{2r}^t (x_0,t_0)= B(x_0,2r) \times (t_0-(2r)^2, t),$ where $t\in (t_0-(2r)^2, t_0)$. 

For $(x,t) \in Q_{2r}$, define $$\alpha(t) = \sup_{x\in B(x_0,2r)} \sin\left(\theta(x,t)\right)$$
where
$$\theta(x,t) = \angle (\omega(x,t), u(x,t)).$$

For a (large) number $M>0$ define 
$$S = \{ (y,s): |\omega(y,s)|>M\} \cap Q_{2r}, $$ and 
$$ S^t = \{ (y,s): |\omega(y,s)|>M\} \cap Q_{2r}^t$$ 
for every $t\in (t_0-(2r)^2, t_0)$. 
In addition, define
$$S_s = \{ y: |\omega(y,s)|>M\} \cap B(x_0,2r), $$ 
$$S_y = \{ s: |\omega(y,s)|>M\} \cap (t_0-(2r)^2, t_0), $$ and 
$$S_y^t = \{ s: |\omega(y,s)|>M\} \cap  (t_0-(2r)^2, t) $$  for every $t\in (t_0-(2r)^2, t_0)$. 

\begin{theorem} 
Let $u$ be a Leray-Hopf solution of the 3D NSE in $\mathbb{R}^3\times (0,\infty)$ with initial data $u_0\in L^2(\mathbb{R}^3)$. Given $(x_0,t_0)\in \mathbb{R}^3\times (0,\infty)$ and $r>0$, suppose that the flow
is smooth in the open cylinder $Q_{2r}(x_0,t_0)$, up to its parabolic boundary, and assume that
there exists a constant $c$ such that 
$$ \alpha(s)\|\nabla u(s)\|_{L^2(S_s)}^{1/2} \leq c$$
for every $s\in (t_0-(2r)^2, t_0)$. Then, the localized enstrophy remains bounded in $Q_{r}(x_0,t_0)$, i.e. 
$$\sup_{t\in(t_0-r^2,t_0)} \int_{B(x_0,r)} |\omega|^2(x,t)\, dx  < \infty$$ and--consequently--$(x_0,t_0)$ is a 
regular point.
\end{theorem}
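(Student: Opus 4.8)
The plan is to run a localized enstrophy estimate and exploit the near-Beltrami structure, encoded through the \emph{curl} form of the nonlinearity, to absorb the vortex stretching into the local diffusion. First I would test the vorticity formulation \eqref{vor} with $\omega\psi^2$ and integrate over $\mathbb{R}^3$. The time-derivative and diffusion terms produce $\tfrac12\tfrac{d}{dt}\|\omega\psi\|_{L^2}^2+\|\nabla\omega\,\psi\|_{L^2}^2$ up to contributions in which a derivative falls on $\psi$; together with the transport term $\tfrac12\int|\omega|^2(u\cdot\nabla\psi^2)$ and the time-cutoff term $\int|\omega|^2\phi^2\eta\eta'$ these are the standard ``harmless'' pieces, controlled by the cut-off bounds $|\nabla\phi|/\phi^\delta\le C/r$ and $|\eta'|\le 2/r^2$ and by the Leray--Hopf a priori bounds $u\in L^\infty_tL^2_x\cap L^2_t\dot H^1_x$ (so that, in particular, $\omega\in L^2_{t,x}$). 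The $\phi^\delta$ structure is exactly what lets the gradient-of-cutoff remainders be reabsorbed into $\|\nabla\omega\,\psi\|_{L^2}^2$ plus time-integrable terms. The only genuinely dangerous term is the nonlinearity.

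The second step is to expose the geometric depletion. Rather than writing the nonlinearity as the vortex stretching $\int(\omega\cdot\nabla)u\cdot\omega\,\psi^2$ (which carries $\nabla u$ but hides the angle), I would keep the curl form: integrating $\int\nabla\times(\omega\times u)\cdot\omega\,\psi^2$ by parts moves the curl onto $\omega\psi^2$, yielding the main term $\int(\omega\times u)\cdot(\nabla\times\omega)\,\psi^2$ plus a cut-off commutator that is again lower order. Since $|\omega\times u|=|\omega||u|\sin\theta\le\alpha(t)\,|\omega||u|$ on $B(x_0,2r)$ and $|\nabla\times\omega|\le\sqrt2\,|\nabla\omega|$, the main term is bounded by $\sqrt2\,\alpha(t)\int|\omega||u||\nabla\omega|\psi^2$. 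This is where being near-Beltrami pays off: the nonlinearity is now multiplied by the smallness factor $\alpha(t)$.

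Third, I would split according to whether $|\omega|\le M$ or $|\omega|>M$. On $\{|\omega|\le M\}$ the integrand is $\le M|u||\nabla\omega|\psi^2$, and Young's inequality absorbs a fraction of $\|\nabla\omega\,\psi\|_{L^2}^2$ while leaving $CM^2\int|u|^2\psi^2\le CM^2\|u_0\|_{L^2}^2$, a bounded remainder. On $S_t$ I would apply H\"older ($L^6$ in $u$, $L^3$ in $\omega\psi$, $L^2$ in $\nabla\omega\,\psi$), then the interpolation $\|\omega\psi\|_{L^3}\le\|\omega\psi\|_{L^2}^{1/2}\|\omega\psi\|_{L^6}^{1/2}$ with the Sobolev bound $\|\omega\psi\|_{L^6}\lesssim\|\nabla\omega\,\psi\|_{L^2}+\text{l.o.t.}$, and finally Young's inequality with exponents $(4,4/3)$, aiming for
\[
\text{(nonlinearity)}\ \le\ \tfrac12\|\nabla\omega\,\psi\|_{L^2}^2\ +\ C\,\alpha(t)^4\,\|\nabla u(t)\|_{L^2(S_t)}^4\,\|\omega\psi\|_{L^2}^2\ +\ (\text{integrable remainders}).
\]
The hypothesis is calibrated exactly for this: $\alpha(t)^4\|\nabla u(t)\|_{L^2(S_t)}^4=\bigl(\alpha(t)\|\nabla u(t)\|_{L^2(S_t)}^{1/2}\bigr)^4\|\nabla u(t)\|_{L^2(S_t)}^2\le c^4\|\nabla u(t)\|_{L^2}^2\in L^1\bigl((t_0-(2r)^2,t_0)\bigr)$ since $u$ is Leray--Hopf. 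After absorbing the diffusion and integrating from the bottom time, where $\eta$ vanishes so there is no boundary term, Gr\"onwall's inequality on this finite interval gives $\sup_{t<t_0}\|\omega\psi(t)\|_{L^2}^2<\infty$, hence $\sup_t\int_{B(x_0,r)}|\omega|^2<\infty$; bounded local enstrophy up to $(x_0,t_0)$ then forces regularity at the point by the standard local bootstrapping (cf.\ \cite{Gr09,GrGu10-1}).

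I expect the main obstacle to be the third step: producing \emph{exactly} $\|\nabla u\|_{L^2(S_s)}$, the velocity gradient measured only on the high-vorticity set, rather than a global or whole-ball counterpart. The naive H\"older/Sobolev chain yields $\|u\|_{L^6(S_t)}$, and a careless Sobolev step would silently enlarge the domain of integration and replace $\|\nabla u\|_{L^2(S_t)}$ by $\|\nabla u\|_{L^2}$. To keep it localized I would trade a factor of $|\omega|$ for $|\nabla u|$ via the pointwise bound $|\omega|\le\sqrt2\,|\nabla u|$ (valid since $\omega$ is the antisymmetric part of $\nabla u$), so that the velocity-gradient norm genuinely remains restricted to $S_t$. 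A secondary but real point of care is the bookkeeping of all cut-off commutator and transport remainders, ensuring each is either absorbed into $\tfrac12\|\nabla\omega\,\psi\|_{L^2}^2$ or is time-integrable via the global Leray--Hopf bounds, with constants depending on $r$ but finite.
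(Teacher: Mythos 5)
Your proposal tracks the paper's argument through every structural step: testing \eqref{vor} with $\psi^2\omega$, moving the curl onto the cut-off vorticity so that the nonlinearity appears as $(\omega\times u)\cdot\nabla\times(\psi^2\omega)$ with the pointwise depletion $|\omega\times u|\le\alpha(s)|u||\omega|$, splitting into $\{|\omega|\le M\}$ and $\{|\omega|>M\}$, and running the same H\"older ($L^6$--$L^3$--$L^2$) / interpolation / Sobolev chain on the high-vorticity set to arrive at $\alpha(s)\|\nabla u\|_{L^2(S_s)}\|\psi\omega\|_{L^2}^{1/2}\|\nabla(\psi\omega)\|_{L^2}^{3/2}$. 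Where you genuinely diverge is in how the estimate is closed. The paper applies H\"older in time first, which leaves $\sup_s\|\psi\omega(s)\|_{L^2}^2$ on the right multiplied by $\sup_s\left(\alpha(s)\|\nabla u(s)\|_{L^2(S_s)}^{1/2}\right)\|\nabla u\|_{L^2(Q_{2r}^t)}^{1/2}$; it then uses absolute continuity of $\int|\nabla u|^2$ to make $\|\nabla u\|_{L^2(Q_{2r})}$ small for $r\le\rho$, absorbs the dangerous terms into the left-hand side, and finishes with a covering argument for $r>\rho$. You instead apply Young pointwise in time, producing the coefficient $\alpha^4\|\nabla u\|_{L^2(S_s)}^4=\left(\alpha\|\nabla u\|_{L^2(S_s)}^{1/2}\right)^4\|\nabla u\|_{L^2(S_s)}^2\le c^4\|\nabla u(s)\|_{L^2}^2\in L^1_s$ in front of $\|\psi\omega\|_{L^2}^2$, and close by Gr\"onwall. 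If the intermediate estimate is granted, your closure is cleaner: it avoids both the smallness-of-$r$ reduction and the covering step, and it makes transparent why the exponent $1/2$ in the hypothesis is exactly the right calibration.

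Two caveats. First, the term you dismiss as a ``cut-off commutator that is again lower order,'' namely $\int(\omega\times u)\cdot[\nabla\psi\times(\psi\omega)]$, is cubic in $(u,\omega,\omega)$ and its high-vorticity part is \emph{not} controllable by the Leray--Hopf bounds alone; the paper must run the same $\alpha$-weighted chain on it (the term $I_3''$), and in your framework it contributes, after Young, a forcing of order $r^{-2}c^2\|u_0\|_{L^2}\|\nabla u(s)\|_{L^2}^2\in L^1_s$ --- harmless, but only because of the hypothesis. Second, the obstacle you single out --- keeping the velocity-gradient norm restricted to $S_s$ through the Sobolev step --- is precisely the least transparent step of the paper, which writes $\|u\|_{L^6(S_s)}\le c\|\nabla u\|_{L^2(S_s)}$ without comment. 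Your proposed repair via $|\omega|\le\sqrt2\,|\nabla u|$ is the right instinct but is not carried through: applying it to $\|\psi\omega\|_{L^2(S_s)}$ inside the interpolation yields only $\|\nabla u\|_{L^2(S_s)}^{1/2}$, which suffices to invoke the hypothesis but forfeits the factor $\|\psi\omega\|_{L^2}^{1/2}$ that feeds Gr\"onwall (leaving a non-integrable $\|\nabla u(s)\|_{L^2}^4$ forcing), while keeping $\|\psi\omega\|_{L^2}^{1/2}$ forces the remaining $\|u\|_{L^6}$ factor onto the whole space, where the hypothesis does not apply. As written, your argument closes only if one grants the same localized Sobolev bound on $S_s$ that the paper asserts.
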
 

\begin{proof} 
Multiply equation \eqref{vor} by $\psi^2\omega$ and integrate over the sub-cylinder $Q_{2r}^t$,
\begin{align}\label{major_1}
\int_{Q_{2r}^t} \omega_s\cdot (\psi^2\omega) \,dy\,ds -\int_{Q_{2r}^t} \Delta\omega\cdot (\psi^2\omega ) \,dy \,ds =&  - \int_{Q_{2r}^t} \nabla\times(\omega\times u)\cdot(\psi^2 \omega) \,dy \,ds. 
\end{align}

Integration by parts yields 
\begin{align}\label{est1_1}
-\int_{Q_{2r}^t} \Delta \omega\cdot (\omega \psi^2) \,dy \,ds & = \int_{Q_{2r}^t} \nabla\omega \cdot \nabla(\psi^2\omega)\, dy\,ds \notag \\
& = \int_{Q_{2r}^t} \nabla\omega \cdot \nabla[(\psi\omega)\psi]\, dy\, ds\notag \\
& = \int_{Q_{2r}^t} [\nabla(\psi\omega) \cdot (\psi\nabla \omega) + (\psi\omega) \cdot (\nabla\psi \nabla\omega)]\, dy \,ds\notag\\
& = \int_{Q_{2r}^t} [\nabla(\psi\omega) \cdot (\nabla(\psi\omega) - (\nabla\psi) \omega) +(\psi\omega) \cdot (\nabla\psi \nabla\omega)]\, dy \,ds\notag\\
& = \int_{Q_{2r}^t} [|\nabla(\psi \omega)|^2 - |\nabla\psi|^2|\omega|^2]\, dy \,ds.
\end{align}

Notice that 
\begin{align}\label{est2_1}
\int_{Q_{2r}^t} \omega_s\cdot(\omega \psi^2)\,dy\,ds &= \frac12\int_{Q_{2r}^t} |\omega|^2_s \psi^2\,dy\,ds \notag\\
& = -\int_{Q_{2r}^t} |\omega|^2\phi^2 \eta\eta_s\,dy\,ds + \frac12 \int_{B(x_0,2r)} |\omega|^2(y,t)\phi^2(y,t)\, dy. 
\end{align}


At this point, it is beneficial to recall the following identity,
\begin{align*}
\int_\Omega (\nabla\times f)\cdot g\,dx = \int_\Omega f\cdot(\nabla \times g)\,dx + \int_\Gamma(n\times f)\cdot g\,d\Gamma. 
\end{align*}
Since $\phi =0$ on $\partial B(x_0,2r)$, 
\begin{align}\label{est3_1}
\int_{Q_{2r}^t}\nabla\times(\omega \times u) \cdot (\psi^2\omega)\,dy\,ds &= \int_{t_0-(2r)^2}^t \int_{B(x_0,2r)}\nabla\times(\omega \times u)\cdot (\psi^2\omega)\,dy\,ds\notag \\
& = \int_{t_0-(2r)^2}^t \int_{B(x_0,2r)} (\omega\times u) \cdot \left(\nabla\times(\psi^2\omega)\right)\,dy\,ds\notag \\
& = \int_{Q_{2r}^t} (\omega\times u) \cdot\left(\nabla\times[\psi (\psi\omega)]\right)\,dy\,ds \notag \\
& =\int_{Q_{2r}^t} \psi(\omega\times u) \cdot [\nabla\times(\psi\omega)]\,dy\,ds +\int_{Q_{2r}^t} (\omega\times u) \cdot[\nabla\psi \times(\psi\omega)]\,dy\,ds, 
\end{align}
where we used the standard formula in vector analysis: 
$$ \nabla\times(\phi {\bf A}) = \phi(\nabla \times {\bf A})+(\nabla\phi)\times {\bf A}.$$

Combining \eqref{major_1} with \eqref{est1_1}--\eqref{est3_1}, we arrive at the following
\begin{align}\label{major2_1}
& \int_{Q_{2r}^t} |\nabla(\psi \omega)|^2\,dy\,ds + \frac12 \int_{B(x_0,2r)} |\omega|^2(y,t)\phi^2(y,t)\, dy \notag\\
& \qquad  \qquad \leq  \int_{Q_{2r}^t} |\omega|^2\left(\phi^2 |\eta||\eta_s| + |\nabla\psi|^2|\right)\,dy\,ds \notag \\
& \qquad \qquad  \quad - \int_{Q_{2r}^t} \psi(\omega\times u) \cdot [\nabla\times(\psi\omega)]\,dy\,ds - \int_{Q_{2r}^t} (\omega\times u) \cdot[\nabla\psi \times(\psi\omega)]\,dy\,ds\notag\\
& \qquad \qquad \equiv I_1- I_2 - I_3. 
\end{align} 

It is plain that 
\begin{align}\label{est4_1}
I_1 &\leq \frac{c}{r^2}  \int_{Q_{2r}^t} |\omega|^2\,dy\,ds \notag\\
&\leq  \frac{c}{r^2}  \int_{Q_{2r}^t} |\nabla u|^2\,dy\,ds
\end{align}
where $c$ denotes a generic constant that may change from line to line. 

Splitting the local flow into the regions of low and high vorticity,
\begin{align*}
I_2 &= \int_{Q_{2r}^t\cap \{|\omega|\leq M\}} \psi(\omega\times u) \cdot [\nabla\times(\psi\omega)]\,dy\,ds+ \int_{Q_{2r}^t\cap \{|\omega|> M\}} \psi(\omega\times u) \cdot [\nabla\times(\psi\omega)]\,dy\,ds\\
& \equiv I_2^{'} + I_2^{''},
\end{align*} 
and 
\begin{align*}
I_3 &= \int_{Q_{2r}^t\cap \{|\omega|\leq M\}} 
(\omega\times u ) \cdot[\nabla\psi \times(\psi\omega)]\,dy\,ds +  \int_{Q_{2r}^t\cap \{|\omega|>M\}} (\omega\times u) \cdot[\nabla\psi \times(\psi\omega)]\,dy\,ds\\
& \equiv I_3^{'} + I_3^{''}. 
\end{align*}

In the regions of low vorticity,
\begin{align}\label{est5_1}
|I_2^{'}| & \leq \frac12 \int_{Q_{2r}^t\cap \{|\omega|\leq M\}} \psi^2 |u|^2|\omega|^2 \,dy\,ds + \frac12\int_{Q_{2r}^t\cap \{|\omega|\leq M\}}|\nabla(\psi\omega)|^2\,dy\,ds\notag\\
& \leq 2M^2r^2 \|u_0\|_{L^2(B(x_0,2r))}^2+ \frac12\int_{Q_{2r}^t}|\nabla(\psi\omega)|^2\,dy\,ds, 
\end{align} 
and 
\begin{align}\label{est6_1}
|I_3^{'}| & \leq \|\nabla \psi\|_{L^\infty(Q_{2r}^t)} M^2 \int_{Q_{2r}^t} |u|\,dy\,ds\notag\\
& \leq \frac{cM^2}{r} \int_{t_0-(2r)^2}^t \int_{B(x_0,2r)} |u|\,dy\,ds \notag\\
& \leq \frac{cM^2}{r} \times 4r^2 \times \left(\frac43 (2r)^3 \pi\right)^{1/2} \times \|u_0\|_{L^2(B(x_0,2r))}\notag \\
& \leq c r^{5/2}\|u_0\|_{L^2(B(x_0,2r))}. 
\end{align}

Next, we estimate the nonlinear terms in the regions of high vorticity. Start with $I_2^{''}$.
\begin{align*}
|I_2^{''}| & = \left | \int_{Q_{2r}^t\cap \{|\omega|>M\}} [(\psi\omega)\times u]\cdot\nabla\times(\psi\omega)\,dy\,ds\right |\notag\\
& \leq \int_{Q_{2r}^t\cap \{|\omega|>M\}} \alpha(s)|u||\psi\omega||\nabla\times(\psi\omega)|\,dy\,ds.
\end{align*}
Using H\"older's inequality w.r.t. $y$, Sobolev embedding and Ladyzhenskaya's inequality gives
\begin{align*}
|I_2^{''}|& \leq \int_{t_0-(2r)^2}^t \alpha(s) \|u\|_{L^6(S_s)} \|\psi\omega\|_{L^3(B(x_0,2r))} \|\nabla(\psi\omega)\|_{L^2(B(x_0,2r))}\,ds \notag \\
&\leq c  \int_{t_0-(2r)^2}^t \alpha(s)\|\nabla u\|_{L^2(S_s)} \|\psi\omega\|_{L^2(B(x_0,2r))}^{1/2} \|\nabla(\psi\omega)\|_{L^2(B(x_0,2r))}^{3/2}\,ds,
\end{align*}

and utilizing H\"older's inequality w.r.t. $s$, followed by Young's inequality yields
\begin{align}\label{est7_1}
|I_2^{''}| &\leq c \left( \int_{t_0-(2r)^2}^t \alpha^4(s) \|\psi\omega\|_{L^2(B(x_0,2r)}^2 \|\nabla u\|_{L^2(S_s)}^4\, ds \right)^{1/4}\left( \int_{t_0-(2r)^2}^t \|\nabla (\psi\omega)\|_{L^2(B(x_0,2r))}^2\,ds\right)^{3/4}\notag\\
&\leq c \sup_{s\in(t_0-(2r)^2,t)} \left(\alpha(s) \|\psi\omega(s)\|_{L^2(B(x_0,2r))}^{1/2} \|\nabla u(s)\|_{L^2(S_s)}^{1/2}\right)\notag\\
& \qquad \qquad \qquad \qquad \times \left(\int_{t_0-(2r)^2}^t \|\nabla u\|_{L^2(S_s)}^2\,ds\right)^{1/4} \|\nabla (\psi\omega)\|_{L^2(Q_{2r}^t)}^{3/2}\notag \\
& \leq c \sup_{s\in(t_0-(2r)^2,t)} \left(\alpha(s) \|\nabla u(s)\|_{L^2(S_s)}^{1/2}\right)\|\nabla u\|_{L^2(Q_{2r}^t)}^{1/2} \notag \\
& \qquad \qquad \qquad \qquad \times \left( \frac34 \|\nabla(\psi\omega)\|_{L^2(Q_{2r}^t)}^{2} + \frac14 \sup_{s\in(t_0-(2r)^2,t)} \|\psi\omega(s)\|_{L^2(B(x_0,2r))}^{2}\right).
\end{align}

Applying a similar argument as above, H\"older's inequality w.r.t $y$ , Sobolev embedding and Ladyzhenskaya's inequality yield 
\begin{align*}
|I_3^{''}| &=\left | \int_{Q_{2r}^t\cap \{|\omega|>M\}} (\omega\times u) \cdot[\nabla\psi \times(\psi\omega)]\,dy\,ds\right |\notag\\
& \leq \int_{Q_{2r}^t\cap \{|\omega|>M\}} \alpha(s)|\nabla \psi||\omega||u||\psi\omega|\,dy\,ds\notag\\
& \leq \int_{Q_{2r}^t\cap \{|\omega|>M\}} \alpha(s)|\nabla \phi \eta||\omega||u||\psi\omega|\,dy\,ds\notag\\
& \leq Cr^{-1}\int_{Q_{2r}^t\cap \{|\omega|>M\}} \alpha(s)|\omega||u||\psi\omega|\,dy\,ds\notag\\
&\leq Cr^{-1} \int_{t_0-(2r)^2}^t \alpha(s) \|\omega\|_{L^2(B(x_0,2r))}\|u\|_{L^3(S_s)} \|\psi\omega\|_{L^6(B(x_0,2r))}\, ds\\
&\leq cr^{-1} \int_{t_0-(2r)^2}^t \alpha(s) \|\omega\|_{L^2(S_s)} \|u\|_{L^2(B(x_0,2r))}^{1/2}\|\nabla u\|_{L^2(S_s)}^{1/2}\|\nabla (\psi\omega)\|_{L^2(B(x_0,2r))}\,ds\\
&\leq cr^{-1} \|u_0\|_{L^2(B(x_0,2r))}^{1/2}\int_{t_0-(2r)^2}^t \alpha(s) \|\nabla u\|_{L^2(S_s)}^{3/2}\|\nabla (\psi\omega)\|_{L^2(B(x_0,2r))}\,ds\\
& \leq cr^{-1}\|u_0\|_{L^2(B(x_0,2r))}^{1/2} \sup_{s\in(t_0-(2r)^2,t)} \left(\alpha(s) \|\nabla u(s)\|_{L^2(S_s)}^{1/2}\right)\\
& \qquad \qquad \qquad \qquad \times\int_{t_0-(2r)^2}^t \|\nabla u\|_{L^2(S_s)}\|\nabla (\psi\omega)\|_{L^2(B(x_0,2r))}\,ds, 
\end{align*}

while H\"older's inequality w.r.t. $s$ in conjunction with Young's inequality imply 
\begin{align}\label{est8_1}
|I_3^{''}| & \leq cr^{-1}\|u_0\|_{L^2(B(x_0,2r))}^{1/2} \sup_{s\in(t_0-(2r)^2,t)} \left(\alpha(s) \|\nabla u(s)\|_{L^2(S_s)}^{1/2}\right)\|\nabla u\|_{L^2(Q_{2r}^t)} \|\nabla (\psi\omega)\|_{L^2(Q_{2r}^t)} \notag \\
&\leq cr^{-2}\|u_0\|_{L^2(B(x_0,2r))}\sup_{s\in(t_0-(2r)^2,t)} \left(\alpha(s) \|\nabla u(s)\|_{L^2(S_s)}^{1/2}\right)^2\|\nabla u\|_{L^2(Q_{2r}^t)}^2 + \frac14 \|\nabla (\psi\omega)\|_{L^2(Q_{2r}^t)} ^2. 
\end{align}

Combining the estimates \eqref{est4_1}--\eqref{est8_1} with \eqref{major2_1}, we get the following bound
\begin{align}\label{major3_1}
& \frac{1}{2}\int_{Q_{2r}^t} |\nabla(\psi \omega)|^2\,dy\,ds + \frac12 \int_{B(x_0,2r)} |\omega|^2(y,t)\phi^2(y,t)\, dy \notag\\
& \qquad \leq c\left(r, M, \|u_0\|_{L^2(B(x_0,2r))}, \|\nabla u\|_{L^2(Q_{2r}^t)}, \sup_{s\in(t_0-(2r)^2,t)} \left(\alpha(s) \|\nabla u(s)\|_{L^2(S_s)}^{1/2}\right)\right) \notag \\
& \qquad  \quad + c\sup_{s\in(t_0-(2r)^2,t)}\left(\alpha(s)\|\nabla u(s)\|_{L^2(B(x_0,2r))}^{1/2}\right)\|\nabla u\|_{L^2(Q_{2r}^t)}^{1/2}\notag \\
& \qquad  \qquad \qquad \qquad \qquad \times \left(\|\nabla (\psi\omega)\|_{L^2(Q_{2r}^t)}^2 + \sup_{s\in(t_0-(2r)^2,t)} \|\psi\omega(s)\|_{L^2(B(x_0,2r))}^{2}\right),
\end{align}
for every $t\in(t_0)-(2r)^2, t_0)$. Since
$$ \sup_{s\in(t_0-(2r)^2,t)}\left(\alpha(s)\|\nabla u(s)\|_{L^2(S_s)}^{1/2}\right) \leq c, $$
for every $t\in(t_0-(2r)^2,t_0)$, taking the supremum in $t\in(t_0-(2r)^2,t_0)$ in \eqref{major3_1} yields 
\begin{align*}
& \int_{Q_{2r}} |\nabla(\psi \omega)|^2\,dy\,ds + \sup_{t\in(t_0-(2r)^2,t_0)} \int_{B(x_0,2r)} |\omega|^2(y,t)\phi^2(y,t)\, dy \notag\\
& \qquad \leq c\left(r, M, \|u_0\|_{L^2(B(x_0,2r))}, \|\nabla u\|_{L^2(Q_{2r})}\right) \notag \\
& \qquad + c\|\nabla u\|_{L^2(Q_{2r})}^{1/2}\left(\|\nabla (\psi\omega)\|_{L^2(Q_{2r}^t)}^2 + \sup_{t\in(t_0-(2r)^2,t_0)} \|\phi\omega(s)\|_{L^2(B(x_0,2r))}^{2}\right).
\end{align*}

Since $u$ is a Leray solution, $\|\nabla u\|_{L^2(Q_{2r})} <\infty$, and
$$c\left(r, M, \|u_0\|_{L^2(B(x_0,2r))}, \|\nabla u\|_{L^2(Q_{2r})}\right) <\infty.$$
Moreover, $\|\nabla u\|_{L^2(Q_{2r})} \, \rightarrow \, 0$ as $r\, \rightarrow \, 0$; hence, for every $\varepsilon >0$ there exists $\rho(\varepsilon)>0$ such that $\|\nabla u\|_{L^2(Q_{2r})} \leq \varepsilon$ for every $r\leq \rho$. 
Setting  $\varepsilon = \frac{1}{4c^2}$ yields
\begin{align*}
& \int_{Q_{2r}} |\nabla(\psi \omega)|^2\,dy\,ds +  \sup_{t\in(t_0-(2r)^2,t_0)}  \int_{B(x_0,2r)} |\omega|^2(y,t)\phi^2(y,t)\, dy \notag\\
& \qquad \leq c\left(r, M, \|u_0\|_{L^2(B(x_0,2r))}, \|\nabla u\|_{L^2(Q_{2r})}\right),
\end{align*}
for any $r \le \rho =\rho(\varepsilon)$.
Consequently,
\begin{align}\label{final_1}
 \sup_{t\in(t_0-(2r)^2,t_0)}\int_{B(x_0,r)} |\omega|^2(y,t)\, dy & \leq \sup_{t\in(t_0-(2r)^2,t_0)} \int_{B(x_0,2r)} |\omega|^2(y,t)\phi^2(y,t)\, dy\notag \\
 &  \leq c\left(r, M, \|u_0\|_{L^2(B(x_0,2r))}, \|\nabla u\|_{L^2(Q_{2r})}\right) < \infty,
\end{align}
for any $r\leq \rho$. If $r>\rho$, covering $B(x_0,r)$ with finitely many balls $B(z_0,\rho)$ for suitable $z_0$s 
and redoing the proof on each cylinder $B(z_0,2\rho)\times (t_0-(2\rho)^2, t)$ yields the desired bound. 

\end{proof}

\bigskip

\centerline{\textbf{Acknowledgments}}

\medskip

The work
of Z.G. is supported in part by the National Science Foundation grant DMS - 1515805.

\bigskip

\end{document}